\newtheorem{theorem}{Theorem}[section]
\newtheorem{corollary}[theorem]{Corollary}
\theoremstyle{definition}
\newtheorem{definition}[theorem]{Definition}
\theoremstyle{remark}
\newtheorem{remark}[theorem]{Remark}
\begin{document}
\begin{frontmatter}
\title{New fractional integral unifying six existing fractional integrals}

%
\author{Udita N. Katugampola\corref{corresp}}

%

\cortext[corresp]{Corresponding author.  Tel.: +13028312694. \\
\indent {\it E-mail address:} uditanalin@yahoo.com (U. N. Katugampola)  }
%
\address{Department of Mathematical Sciences, University of Delaware, Newark, DE 19716, U.S.A.}

\begin{abstract}
In this paper we introduce a new fractional integral that generalizes six existing fractional integrals, namely, Riemann-Liouville, Hadamard, Erd\'elyi-Kober, Katugampola, Weyl and Liouville fractional integrals in to one form. Such a generalization takes the form 
\[
    \left({}^{\rho}\mathcal{I}^{\alpha, \beta}_{a+;\eta, \kappa}f\right)(x)=\frac{\rho^{1-\beta}x^{\kappa}}{\Gamma(\alpha)}\int_a^x \frac{\tau^{\rho \eta +\rho-1}}{(x^\rho-\tau^\rho)^{1-\alpha}}f(\tau)\text{d}\tau, \quad 0\leq a < x < b \leq \infty.
\]
A similar generalization is not possible with the Erd\'elyi-Kober operator though there is a close resemblance with the operator in question. We also give semigroup, boundedness, shift and integration-by-parts formulas for completeness. 
\end{abstract}
\begin{keyword}
Riemann-Liouville integral \sep Hadamard integral \sep Erd\'elyi-Kober integral \sep Katugampola integral 
 
%
%
%
\end{keyword}
\end{frontmatter}

%

%
%
%

\setcounter{section}{1}
\section*{}
Until very recently, the fractional calculus had been a purely mathematical subject without apparent applications. Nowadays, it plays a major role in modeling anomalous behavior and memory effects and  
appears naturally in modeling long-term behaviors, especially in the areas of viscoelastic materials and viscous fluid dynamics  \cite{what1,quo}. Fractional integrals alone, without its counterpart, naturally appear in certain modeling and theoretical problems, for example, probability theory \cite{pro1}, surface-volume reaction problems \cite{survol}, anomalous diffusion \cite{pagnini}, porous medium equations \cite{ploc1,ploc2}, and numerical analysis \cite{almeida}, among other applications.   
Now, consider the following generalized integral.

\begin{definition}\label{newI} Let $f\in\textit{X}^p_c(a,b)$ \cite{udita2}, $\alpha>0, \rho,\eta,\kappa \in \mathbb{R}$. The left-sided generalized fractional integral is defined by, 
\begin{equation}
\left({}^\rho I^{\alpha, \beta}_{a+; \eta, \kappa}f\right)(x) = \frac{\rho^{1-\beta}x^{\kappa}}{\Gamma({\alpha})} \int^x_a \frac{\tau^{\rho(\eta+1)-1} }{(x^{\rho } - \tau^{\rho })^{1-\alpha}}f(\tau)\,d\tau, \quad (0 \leq a < x <b \leq \infty),
\label{lint}
\end{equation}
if the integral exists.
\end{definition}
It can be seen that this integral generalizes four existing integrals. For $\eta=0, \kappa=0$, the \textit{Riemann-Liouville} fractional integral is obtained when $\rho = 1$, while the \textit{Katugampola} integral is obtained if $\beta=\alpha$, further, in this case, when $\rho \rightarrow 0^+$, the integral coincides with \textit{Hadamard} integral, which can be easily verified using L'Hospital rule. Now, for $\beta=0, \kappa=-\rho(\alpha+\eta)$, and any $\eta$, it gives the \textit{Erd\'{e}lyi-Kober}(type) operator. It should be remarked that $\rho^{1-\beta}$ is complex when $\beta\notin \mathbb{Z}$ and $\rho <0$ and can be treated using theory of complex analysis considering appropriate branches.  

Fractional integrals sometimes work in pairs, specially in variational calculus \cite{almeida}. The corresponding right-sided fractional integral can be defined as,
\begin{equation}
\left({}^\rho I^{\alpha, \beta}_{b-; \eta, \kappa}f\right)(x) = \frac{\rho^{1-\beta}x^{\rho\eta}}{\Gamma({\alpha})} \int^b_x \frac{\tau^{\kappa+\rho-1} }{(\tau^{\rho } - x^{\rho })^{1-\alpha}}f(\tau)\,d\tau, \quad (0 \leq a < x <b \leq \infty),
\label{rint}
\end{equation}
if the integral exists. 

In the Definitions~(\ref{lint}) and (\ref{rint}), we can also consider the cases $a=-\infty$ and $b=\infty$, respectively, and are known in the literature as Weyl and Liouville type integrals, respectively. Such integrals are corresponding to \textit{infinite memory} effects and have applications in financial mathematics and diffusion models \cite{Tarasov,Baillie}. 

\begin{figure}[ht]
	\centering
		\includegraphics{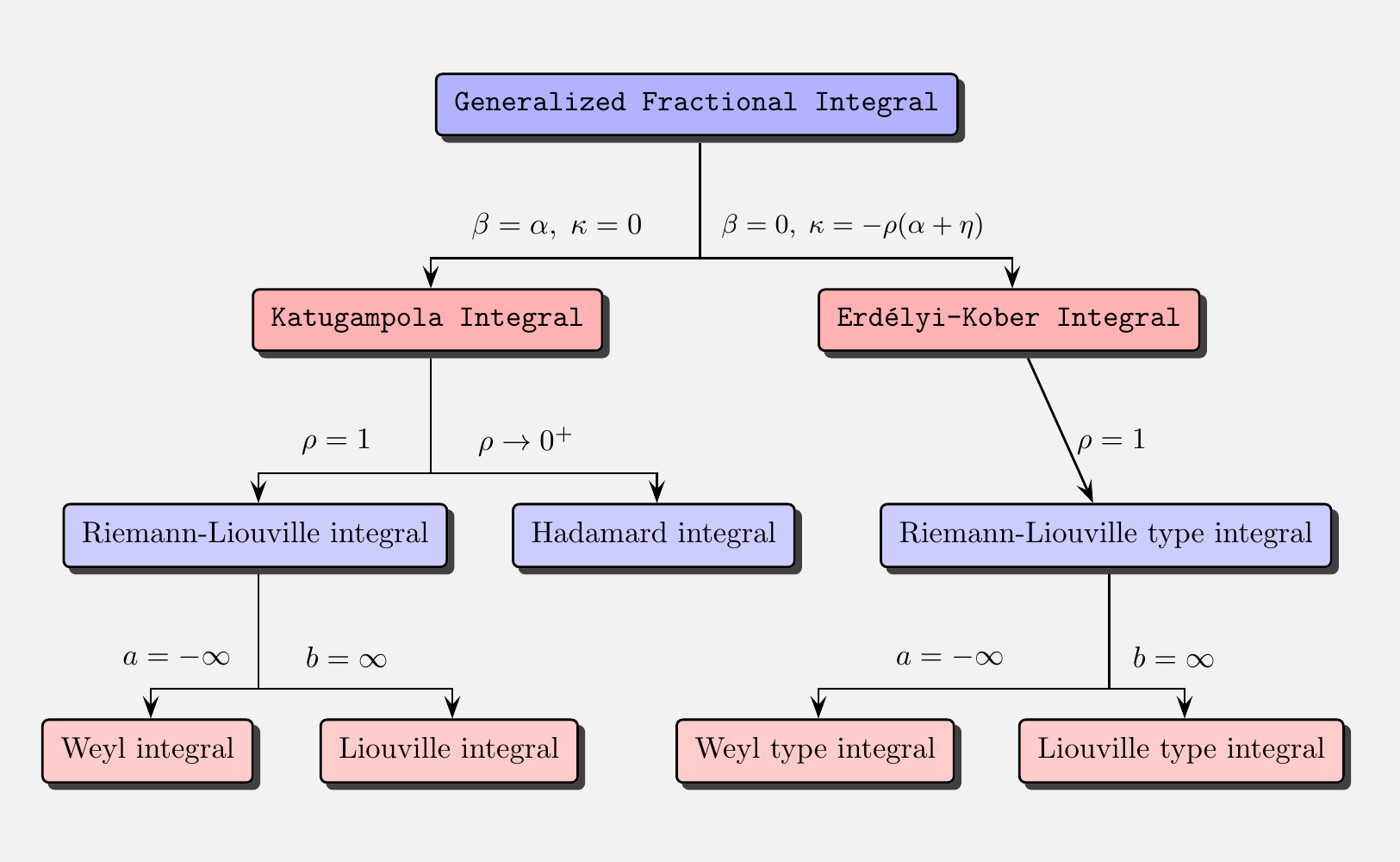}
	\label{fig:Fig1}
	\caption{Generalized fractional integral}
\end{figure}


Let us note that, using the change of variable $u=(\tau/x)^\rho$, the integral (\ref{lint}) can be rewritten in the Riemann-Liouville form as 
\begin{equation}
\left({}^\rho I^{\alpha, \beta}_{a+; \eta, \kappa}f\right)(x) = \frac{x^{\kappa+\rho(\alpha+\eta)}}{\rho^\beta\Gamma(\alpha)}\int_0^1(1-u)^{\alpha-1}u^\eta f(xu^{1/\rho}) du, \quad \rho \ne 0.
\label{lint2}
\end{equation}
Further the Riemann-Liouville fractional integral is used to define both the Riemann-Liouville and the Caputo fractional derivatives \cite{key-8, key-9}. To justify the claim about the Hadamard integral, when $\kappa = 0$ and $\beta=\alpha>0$, using L'Hospital rule and taking $\rho \rightarrow 0^+$, we have
\begin{align}
\lim_{\rho \rightarrow 0^+} \frac{\rho^{1-\alpha}}{\Gamma({\alpha})} \int^x_a \frac{\tau^{\rho-1}}{(x^{\rho} - \tau^{\rho})^{1-\alpha}} f(\tau) d\tau &=\frac{1}{\Gamma({\alpha})}  \int^x_a \lim_{\rho \rightarrow 0^+} \bigg(\frac{x^{\rho} - \tau^{\rho}}{\rho}\bigg)^{\alpha -1} \tau^{\rho-1} f(\tau) d\tau \nonumber \\
&= \frac{1}{\Gamma(\alpha)}\int_a^x \Big(\log \frac{x}{\tau} \Big)^{\alpha -1} \frac{f(\tau)}{\tau} d\tau, \nonumber
\end{align}
which is the Hadamard fractional integral \cite[p.110]{key-8}. \textit{It should be pointed out that a similar result is not possible with the Erd\'elyi-Kober operator though there is a close resemblance with the operator in }(\ref{lint}). Recent results about the Hadamard and Hadamard-type integrals such as Hadamard-type fractional calculus \cite{key-3}, composition and semigroup properties \cite{key-1}, Mellin transforms \cite{key-5}, integration by parts formulae \cite{key-6}, \emph{G-transform} representations \cite{key-7}, impulsive differential equations with Hadamard fractional derivative \cite{imp1,boun1}, and Hadamard type fractional differential systems \cite{hsys1} can be found in the literature, among others.

The Katugampola fractional integral was first introduced in \cite{udita1,u-1} as a generalization of $n-$fold integral, and then a simpler version was discussed in \cite{udita2} along with the corresponding fractional derivatives. The Mellin transforms of it were given in \cite{udita3}. The same reference also discusses a new class of generalized Stirling numbers of $2^{nd}$ kind and a recurrence formula for such sequences. Further applications of Katugampola fractional integrals or derivatives are in probability theory \cite{pro1}, variational calculus \cite{almeida}, inequalities \cite{chen1}, Langevin equations \cite{lang1}, Fourier and Laplace transforms \cite{chen2} fractional differential equations \cite{caka} and Numerical analysis \cite{num1}, among others.

\section{Main Results}
For simplicity, we give the following results without proofs. The proofs of the similar results for the Erd\'elyi-Kober type operators can be found in the classical books by Kiryakova \cite{Kirk}, Yakubovich and Luchko \cite[p.54]{Yaku}, McBride \cite[p.123]{McB}, 
Kilbas et al. \cite[Section 2.6]{key-8} and Samko et al. \cite[Section 18.1]{key-9}, and can be generalized to the present case.

For ``sufficiently good'' functions $f, g$ we have the following results.
\begin{enumerate}
	\item [a)] Shift formulae 
	      \begin{equation} \label{shift1}
				\begin{aligned}
				    {}^\rho I^{\alpha, \beta}_{a+; \eta, \kappa}x^{\rho\gamma}f(x) &= \left({}^\rho I^{\alpha, \beta}_{a+; \eta+\gamma, \kappa}f\right)(x), \\
						{}^\rho I^{\alpha, \beta}_{b-; \eta, \kappa}x^{\gamma}f(x) &= \left({}^\rho I^{\alpha, \beta}_{b-; \eta, \kappa+\gamma}f\right)(x).
			  	\end{aligned}
	       \end{equation}
	\item [b)] Composition (index) formulae
	    \begin{equation} \label{comp1}
				\begin{aligned}
				    {}^\rho I^{\alpha_1, \beta_1}_{a+; \eta_1, \kappa_1}{}^\rho I^{\alpha_2, \beta_2}_{a+; \eta_2, -\rho\eta_1}f&= {}^\rho I^{\alpha_1+\alpha_2, \beta_1+\beta_2}_{a+; \eta_2, \kappa_1}f, \\ {}^\rho I^{\alpha_1, \beta_1}_{b-; \eta_1, -\rho\eta_2}{}^\rho I^{\alpha_2, \beta_2}_{b-; \eta_2, \kappa_2}f&= {}^\rho I^{\alpha_1+\alpha_2, \beta_1+\beta_2}_{b-; \eta_1, \kappa_2}f.
			  	\end{aligned}
	       \end{equation}
	\noindent which hold in the corresponding spaces of the functions $f$ if $\alpha_2>0, \alpha_1+\alpha_2 \geq 0$ or $\alpha_2 <0, \alpha_1 >0$ or $\alpha_1<0, \alpha_1+\alpha_2 \leq 0$. (see Theorem 2.5 of \cite{key-9}). 				
	\item [c)] Fractional product-integration formulae 
	    \begin{equation}\label{ipart}
			    \int_a^b x^{\rho-1}f(x)\left({}^\rho I^{\alpha, \beta}_{a+; \eta, \kappa}g\right)(x)dx=\int_a^b x^{\rho-1}g(x)\left({}^\rho I^{\alpha, \beta}_{b-; \eta, \kappa}f\right)(x)dx
			\end{equation}
				Similar results are also valid when $a=0$ and $b=\infty$ and in particular for $\rho=2$ and $\rho=1$.
\end{enumerate}
\noindent The proof of part (a) is straightforward and for completeness we shall prove parts (b) and (c) later in the paper. Here we mean by a ``sufficiently good'' is that $f\in\textit{X}^p_c(a,b)$ \cite{udita2}, that is $t^{c-1/p}f(t) \in \textit{L}_p(a,b)$. We need such additional conditions to guarantee convergence. Further results on such conditions can be found, for example, in \cite{Kirk} and \cite{key-9}. In such a space we have the following boundedness result.


\begin{theorem} \label{eq:th1}
Let $\alpha > 0,\, 1 \leq p \leq \infty,\, 0 <a < b < \infty$ and let $\rho \in \mathbb{R}$ and $c \in \mathbb{R}$ be such that $\rho \geq c$. Then the operator ${}^\rho I^{\alpha, \beta}_{a+; \eta, \kappa}$ is bounded in $\textit{X}^p_c(a,b)$ and 
\begin{equation}
\left\|{}^\rho I^{\alpha, \beta}_{a+; \eta, \kappa}f \right\|_{\textit{X}^p_c} \leq K\left\|f\right\|_{\textit{X}^p_c}
\end{equation}
\label{eq:thm1}
where 
\begin{equation}
K = \frac{\rho^{1-\beta}b^{\rho(\alpha+\eta)+\kappa}}{\Gamma(\alpha)}\int^{\frac{b}{a}}_1 \frac{u^{c-\rho(\alpha+\eta)-1}}{(u^{\rho}-1)^{1-\alpha}}du,\quad \quad \rho \ne 0, \kappa \in \mathbb{R}, \eta \geq 0.  \label{eq:con1}
\end{equation}


\end{theorem}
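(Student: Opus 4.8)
The plan is to reduce the estimate, in the spirit of the classical treatment of the Erd\'elyi--Kober operators in \cite{key-8,key-9,Kirk}, to a one--dimensional kernel bound handled by the \emph{generalized Minkowski (integral) inequality}; the only genuinely new point is that, because of the factor $x^{\kappa}$, the operator is \emph{not} scale invariant, so a surplus power of $x$ must be absorbed using $x<b<\infty$. First I would substitute $u=x/\tau$ in \eqref{lint} — the same device that produced \eqref{lint2}. Collecting the powers of $x$ and of $u$, using $x^{\rho}-\tau^{\rho}=x^{\rho}u^{-\rho}(u^{\rho}-1)$, one obtains the ``multiplicative convolution'' form
\[
   \left({}^{\rho}I^{\alpha,\beta}_{a+;\eta,\kappa}f\right)(x)=\frac{\rho^{1-\beta}\,x^{\rho(\alpha+\eta)+\kappa}}{\Gamma(\alpha)}\int_{1}^{x/a}\frac{u^{-\rho(\alpha+\eta)-1}}{(u^{\rho}-1)^{1-\alpha}}\,f\!\left(x/u\right)du ,\qquad\rho\neq 0 .
\]
I would carry out the argument for $\rho>0$; for $\rho<0$ one reads $(1-u^{\rho})^{1-\alpha}$ and picks the branch of $\rho^{1-\beta}$ as indicated after Definition~\ref{newI}, and the rest is formally identical.

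Next I would multiply by $x^{c}$, write $x^{c+\rho(\alpha+\eta)+\kappa}f(x/u)=x^{\rho(\alpha+\eta)+\kappa}\,u^{c}\,\varphi(x/u)$ with $\varphi(t):=t^{c}f(t)$, and majorize the surplus factor by $x^{\rho(\alpha+\eta)+\kappa}\le b^{\rho(\alpha+\eta)+\kappa}$ (valid since $a<x<b$ and the exponent $\rho(\alpha+\eta)+\kappa$ is nonnegative, as is implicit in \eqref{eq:con1}; were it negative one would simply replace $b$ by $a$ there). Absorbing the $x$--dependent upper limit into an indicator, this yields the pointwise bound
\[
   x^{c}\bigl|({}^{\rho}I^{\alpha,\beta}_{a+;\eta,\kappa}f)(x)\bigr|\le\frac{\rho^{1-\beta}b^{\rho(\alpha+\eta)+\kappa}}{\Gamma(\alpha)}\int_{1}^{b/a}\frac{u^{c-\rho(\alpha+\eta)-1}}{(u^{\rho}-1)^{1-\alpha}}\,\bigl|\varphi(x/u)\bigr|\,\mathbf{1}_{(au,\,b)}(x)\,du ,
\]
where $x/u\in(a,x)\subset(a,b)$, so $\varphi(x/u)$ is meaningful.

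For $1\le p<\infty$ I would then take the $L_{p}$ norm in $x$ with respect to $dx/x$ — which, once the weight $x^{c}$ has been moved onto $\varphi$, is precisely $\|\cdot\|_{\textit{X}^{p}_{c}(a,b)}$ — and apply the generalized Minkowski inequality to bring the norm under the $du$ integral. The inner norm $\bigl(\int_{au}^{b}|\varphi(x/u)|^{p}\,dx/x\bigr)^{1/p}$ is evaluated by the further substitution $y=x/u$, which preserves the measure $dx/x$ and, since $u\ge 1$, maps $(au,b)$ into $(a,b/u)\subset(a,b)$; hence it is $\le\|\varphi\|_{L_{p}((a,b),\,dx/x)}=\|f\|_{\textit{X}^{p}_{c}}$. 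Pulling this factor out leaves exactly the integral defining $K$ in \eqref{eq:con1}, giving $\|{}^{\rho}I^{\alpha,\beta}_{a+;\eta,\kappa}f\|_{\textit{X}^{p}_{c}}\le K\|f\|_{\textit{X}^{p}_{c}}$. The case $p=\infty$ is even simpler: estimate $|\varphi(x/u)|\le\|f\|_{\textit{X}^{\infty}_{c}}$ directly under the integral and take the essential supremum in $x$. Finally $K<\infty$ because $\alpha>0$ makes the only singularity, at $u=1$, integrable $\bigl((u^{\rho}-1)^{\alpha-1}\sim(\rho(u-1))^{\alpha-1}\bigr)$, while $a>0$ keeps $b/a$ finite; the hypothesis $\rho\ge c$ (together with $\eta\ge 0$) is the standard technical restriction carried over from the Erd\'elyi--Kober theory \cite{key-8}, where for the limiting case $a=0$ it controls the behaviour of the kernel at the origin.

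The step I expect to be the main obstacle — and the one forcing $0<a<b<\infty$ — is the control of the non--homogeneous prefactor: unlike Riemann--Liouville or Hadamard, ${}^{\rho}I^{\alpha,\beta}_{a+;\eta,\kappa}$ does not commute with dilations, so the Minkowski/convolution scheme does not close on its own and the surplus factor $x^{\rho(\alpha+\eta)+\kappa}$ must be majorized uniformly on the interval; this is precisely why the constant $K$ carries $b^{\rho(\alpha+\eta)+\kappa}$, why it degenerates as $b\to\infty$, and why the Weyl/Liouville versions $(b=\infty)$ require a separate treatment. Everything else is routine bookkeeping with exponents and with the measure $dx/x$.
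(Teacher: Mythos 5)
Your proposal is correct and follows essentially the same route as the paper: the substitution $u=x/\tau$, the generalized Minkowski inequality to pull the $X^p_c$ norm inside the $du$ integral, the majorization of the surplus power $x^{\rho(\alpha+\eta)+\kappa}$ by $b^{\rho(\alpha+\eta)+\kappa}$, and the inner change of variable $y=x/u$ producing the factor $u^c$ and the bound by $\|f\|_{X^p_c}$, with the same direct supremum estimate for $p=\infty$. Your explicit remarks on the implicit sign assumption for the exponent $\rho(\alpha+\eta)+\kappa$ and on the fact that $\rho\ge c$ is not actually needed when $a>0$ are accurate refinements of points the paper leaves tacit.
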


\begin{proof} The proof is 
similar to the case of Katugampola integral \cite[Theorem 3.1]{udita1}. 
First consider the case $1 \leq p \leq \infty$. Since $f \in \textit{X}^p_c(a,b)$, then $t^{c-1/p}f(t) \in \textit{L}_p(a,b)$ and we can apply the generalized Minkowsky inequality. We thus have
\begin{align}
\|{}^\rho I^{\alpha, \beta}_{a+; \eta, \kappa}f\|_{\textit{X}^p_c}
&=\left(\int_a^{b}x^{cp}\left|\frac{\rho^{1-\beta}x^\kappa}{\Gamma(\alpha)}\int^x_a\big(x^\rho-t^\rho\big)^{\alpha-1}t^{\rho\eta+\rho-1} f(t) dt\right|^p\frac{dx}{x} \right)^\frac{1}{p} \nonumber \\
&=\frac{\rho^{\alpha-\beta}}{\Gamma(\alpha)}\left(\int_a^{b}\left|\int^x_a x^{c+\kappa-\frac{1}{p}}\, t^{\rho\eta+\rho-1} \left(\frac{x^\rho-t^\rho}{\rho}\right)^{\alpha-1} f(t) dt\right|^p dx \right)^\frac{1}{p}\nonumber \\
&=\frac{\rho^{\alpha-\beta}}{\Gamma(\alpha)}\left(\int_a^{b}\left|\int^x_a x^{c+\kappa-\frac{1}{p}}\, t^{\rho(\eta+\alpha)-1} \left(\frac{{(\frac{x}{t})}^{\rho}-1}{\rho}\right)^{\alpha-1} f(t) dt\right|^p dx \right)^\frac{1}{p}\nonumber \\
&=\frac{\rho^{\alpha-\beta}}{\Gamma(\alpha)}\left(\int_a^{b}\left|\int_1^\frac{x}{a} x^{c+\kappa-\frac{1}{p}}\, \left(\frac{x}{u}\right)^{\rho(\eta+\alpha)-1} \left(\frac{u^{\rho}-1}{\rho}\right)^{\alpha-1} f\left(\frac{x}{u}\right) x \frac{du}{u^2}\right|^p dx \right)^\frac{1}{p}\nonumber \\
&\leq\frac{\rho^{\alpha-\beta}}{\Gamma(\alpha)} \int_1^{\frac{b}{a}}\left(\frac{u^{\rho}-1}{\rho }\right)^{\alpha -1}\cdot\frac{1}{u^{\rho(\eta+\alpha)+1}}\left(\int_{at}^b x^{cp} \left|f\left(\frac{x}{u}\right)\right|^p\,\frac{dx}{x}\right)^{\frac{1}{p}} du \cdot b^{\rho(\eta+\alpha)+\kappa}\nonumber \\
 &= \frac{\rho^{\alpha-\beta}b^{\rho(\eta+\alpha)+\kappa}}{\Gamma(\alpha)}\int_1^{\frac{b}{a}}\left(\frac{u^{\rho}-1}{\rho}\right)^{\alpha -1}\cdot\frac{u^c}{u^{\rho(\eta+\alpha)+1}}\left(\int_a^{b/u} \big|t^c f(t)\big|^p\,\frac{dt}{t}\right)^{\frac{1}{p}}du \nonumber 
\end{align}
and hence 
\begin{equation*}
\|{}^\rho_a I^\alpha_{t}f \|_{\textit{X}^p_c} \leq \textsl{K}\|f\|_{\textit{X}^p_c}
\end{equation*}
where
\begin{equation}
   \textsl{K} = \frac{\rho^{1-\beta}b^{\rho(\alpha+\eta)+\kappa}}{\Gamma(\alpha)}\int^{\frac{b}{a}}_1 \frac{u^{c-\rho(\alpha+\eta)-1}}{(u^{\rho}-1)^{1-\alpha}}du, 
	\qquad   1\leq p < \infty
\label{eq:con2}   
\end{equation}
thus, Theorem \ref{eq:th1} is proved for $1\leq p < \infty$. For $p=\infty$, by taking into account the essential supremum \cite[Eq. (3.2)]{udita2}, we have 
\begin{align}
\Big|x^c\big({}^\rho I^{\alpha, \beta}_{a+; \eta, \kappa}f\big)(x)\Big| &\leq \frac{\rho^{1-\beta}b^\kappa}{\Gamma({\alpha})} \int^x_a (x^{\rho} - \tau^{\rho})^{\alpha -1} \tau^{\rho\eta+\rho-1}\Big(\frac{x}{\tau}\Big)^c \big|\tau^cf(\tau)\big| d\tau \nonumber \\
&\leq \frac{\rho^{1-\beta}b^{\rho(\alpha+\eta)+\kappa}}{\Gamma(\alpha)}\int^{\frac{b}{a}}_1 \frac{u^{c-\rho(\alpha+\eta)-1}}{(u^{\rho}-1)^{1-\alpha}}du\cdot\|f\|_{\textit{X}^{\infty}_c}  
\end{align}
after the substitution $u=x/\tau$. This agrees with (\ref{eq:con2}) above. This completes the proof of the theorem.  
\end{proof}	
For simplicity, we have only considered the cases of $0 \leq a < \leq b < \infty$. For $a=-\infty$ and $b=\infty$, we obtain a different $K$ in (\ref{eq:con1}).


Next we give the semigroup properties of the integral operator.

\begin{theorem} \label{eq:th2}
Let $\alpha_1, \alpha_2 >0,\, \beta_1, \beta_2 \in \mathbb{R}, \, 1 \leq p \leq \infty, \, 0 < a < b < \infty $ and let $\rho \in \mathbb{R}$ and $c \in \mathbb{R}$ be such that $\rho \geq c$. Then for $f \in \textit{X}^p_c(a,b)$ the semigroup properties hold. That is,
\begin{equation}\label{eq:semi}
  {}^\rho I^{\alpha_1, \beta_1}_{a+; \eta_1, \kappa_1}{}^\rho I^{\alpha_2, \beta_2}_{a+; \eta_2, -\rho\eta_1}f= {}^\rho I^{\alpha_1+\alpha_2, \beta_1+\beta_2}_{a+; \eta_2, \kappa_1}f \quad \mbox{and} \quad {}^\rho I^{\alpha_1, \beta_1}_{b-; \eta_1, -\rho\eta_2}{}^\rho I^{\alpha_2, \beta_2}_{b-; \eta_2, \kappa_2}f= {}^\rho I^{\alpha_1+\alpha_2, \beta_1+\beta_2}_{b-; \eta_1, \kappa_2}f.
\end{equation}
In particular, we have
\begin{equation}
{}^\rho I^{\alpha_1, \beta_1}_{a+; \eta, \kappa}{}^\rho I^{\alpha_2, \beta_2}_{a+; \eta, -\rho\eta}f= {}^\rho I^{\alpha_1+\alpha_2, \beta_1+\beta_2}_{a+; \eta, \kappa}f \quad \mbox{and} \quad {}^\rho I^{\alpha_1, \beta_1}_{b-; \eta, -\rho\eta}{}^\rho I^{\alpha_2, \beta_2}_{b-; \eta, \kappa}f= {}^\rho I^{\alpha_1+\alpha_2, \beta_1+\beta_2}_{b-; \eta, \kappa}f.
\end{equation}  
\end{theorem}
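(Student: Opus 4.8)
The plan is to prove the first identity in \eqref{eq:semi}; the second (right-sided) identity follows by an entirely analogous computation, and the ``in particular'' statements are the special case $\eta_1=\eta_2=\eta$. The natural strategy is to pass to the Riemann--Liouville form \eqref{lint2}, since composition of Riemann--Liouville-type operators is classical (cf. Theorem 2.5 of \cite{key-9}), and the whole point of the reparametrization $u=(\tau/x)^\rho$ is to reduce everything to that known case. So first I would rewrite both ${}^\rho I^{\alpha_1,\beta_1}_{a+;\eta_1,\kappa_1}$ and ${}^\rho I^{\alpha_2,\beta_2}_{a+;\eta_2,-\rho\eta_1}$ using \eqref{lint2}, then substitute the inner integral into the outer one.

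The key computational steps, in order, are as follows. First, write $g_2(x):=\bigl({}^\rho I^{\alpha_2,\beta_2}_{a+;\eta_2,-\rho\eta_1}f\bigr)(x) = \dfrac{x^{-\rho\eta_1+\rho(\alpha_2+\eta_2)}}{\rho^{\beta_2}\Gamma(\alpha_2)}\int_0^1 (1-v)^{\alpha_2-1} v^{\eta_2} f(xv^{1/\rho})\,dv$. Second, apply \eqref{lint2} again with parameters $(\alpha_1,\beta_1,\eta_1,\kappa_1)$ to $g_2$; this produces a double integral $\dfrac{x^{\kappa_1+\rho(\alpha_1+\eta_1)}}{\rho^{\beta_1}\Gamma(\alpha_1)}\int_0^1\!\!\int_0^1 (1-u)^{\alpha_1-1}u^{\eta_1}\,(xu^{1/\rho})^{-\rho\eta_1+\rho(\alpha_2+\eta_2)}\rho^{-\beta_2}\Gamma(\alpha_2)^{-1}(1-v)^{\alpha_2-1}v^{\eta_2} f(xu^{1/\rho}v^{1/\rho})\,dv\,du$ — note that the power of $x$ telescopes nicely to $x^{\kappa_1+\rho(\alpha_1+\alpha_2+\eta_2)}$ and the prefactor combines to $\rho^{-\beta_1-\beta_2}/(\Gamma(\alpha_1)\Gamma(\alpha_2))$, with the troublesome $\rho\eta_1$ exponents cancelling exactly — this cancellation is precisely why the middle subscript was chosen to be $-\rho\eta_1$. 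Third, substitute $w=uv$ (so that $f(x(uv)^{1/\rho})=f(xw^{1/\rho})$), changing variables from $(u,v)$ to $(u,w)$ with $dv = du\,dw/u$ on the region $0<w<u<1$, and collect the $u$-dependence into an inner integral $\int_w^1 (1-u)^{\alpha_1-1} u^{\eta_1 + (\alpha_2+\eta_2) - \eta_2 - 1}(1-w/u)^{\alpha_2-1}\,du$. Fourth, in that inner integral substitute $u = w + s(1-w)$ (or $u\mapsto$ the standard Euler-Beta substitution) to evaluate it as $(1-w)^{\alpha_1+\alpha_2-1} w^{\eta_2} B(\alpha_1,\alpha_2)$; using $B(\alpha_1,\alpha_2)=\Gamma(\alpha_1)\Gamma(\alpha_2)/\Gamma(\alpha_1+\alpha_2)$ the gamma factors cancel against the prefactor and we are left with exactly $\dfrac{x^{\kappa_1+\rho(\alpha_1+\alpha_2+\eta_2)}}{\rho^{\beta_1+\beta_2}\Gamma(\alpha_1+\alpha_2)}\int_0^1 (1-w)^{\alpha_1+\alpha_2-1} w^{\eta_2} f(xw^{1/\rho})\,dw$, which is $\bigl({}^\rho I^{\alpha_1+\alpha_2,\beta_1+\beta_2}_{a+;\eta_2,\kappa_1}f\bigr)(x)$ by \eqref{lint2}.

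I expect the main obstacle to be \emph{justifying} the interchange of the order of integration (Fubini/Tonelli) rather than the algebra, which is forced once the substitutions are chosen correctly. For $f\in X^p_c(a,b)$ with $\alpha_1,\alpha_2>0$ and $\rho\ge c$, Theorem~\ref{eq:th1} guarantees that each operator maps $X^p_c(a,b)$ boundedly into itself, so the iterated integral is absolutely convergent and Fubini applies; one should spell this out, and also note the harmless subtlety that when $\rho<0$ the change of variable $u=(\tau/x)^\rho$ reverses orientation (already implicitly handled in \eqref{lint2}) and that $\rho^{1-\beta}$ is to be read via a fixed branch, as remarked after Definition~\ref{newI}. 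A secondary point worth a sentence: one can avoid reproving the Beta-integral identity by invoking the classical Riemann--Liouville semigroup property (Theorem 2.5 of \cite{key-9}) directly on the function $h(u):=f(xu^{1/\rho})$ on $(0,1)$ after absorbing the weights $u^{\eta_j}$ via the shift formula \eqref{shift1}; this is the cleanest route and makes the proof essentially a bookkeeping exercise on the parameters. Finally, the right-sided identity is handled the same way after the substitution $u=(x/\tau)^\rho$ puts \eqref{rint} into an analogous Beta-integral form, and the ``in particular'' equations are immediate specializations.
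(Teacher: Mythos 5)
Your proposal is essentially the same argument as the paper's: the paper works in the original variables, applies Fubini/Dirichlet to swap the order of integration, and evaluates the inner integral via $u=(\tau^\rho-t^\rho)/(x^\rho-t^\rho)$ as a Beta function, with $\kappa_2=-\rho\eta_1$ producing exactly the cancellation you identify; you perform the same interchange-plus-Beta computation after first normalizing both operators to the $[0,1]$ form \eqref{lint2}, and you handle the extension from nice functions to $X^p_c(a,b)$ via Theorem~\ref{eq:th1} just as the paper does. Two small points to fix. First, \eqref{lint2} with lower limit $0$ is literally valid only for $a=0$; since the theorem assumes $0<a<b$, your step 1 should use the lower limit $(a/x)^\rho$, the inner integral in step 2 should start at $(a/(xu^{1/\rho}))^\rho$, and the Fubini region in step 3 becomes $(a/x)^\rho\le w\le u\le 1$ --- the inner $u$-integral from $w$ to $1$ and hence the Beta evaluation are unaffected, so this is only bookkeeping, but as written the computation silently assumes $a=0$. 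Second, the exponent of $u$ in your step-3 inner integral should be $\alpha_2-1$ (after combining with $(1-w/u)^{\alpha_2-1}=u^{1-\alpha_2}(u-w)^{\alpha_2-1}$ one gets $\int_w^1(1-u)^{\alpha_1-1}(u-w)^{\alpha_2-1}\,du$): your displayed exponent $\eta_1+(\alpha_2+\eta_2)-\eta_2-1$ retains the $+\eta_1$ from the weight $u^{\eta_1}$ but drops the $-\eta_1$ contributed by $(u^{1/\rho})^{-\rho\eta_1}$; the Beta value you then quote, $(1-w)^{\alpha_1+\alpha_2-1}w^{\eta_2}B(\alpha_1,\alpha_2)$, is the correct one, so this is a typo rather than a conceptual error. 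Your suggested shortcut of absorbing the weights via \eqref{shift1} and invoking the classical Riemann--Liouville semigroup property is a legitimate alternative the paper does not take, and would indeed reduce the proof to parameter bookkeeping.
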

\begin{proof}
For brevity we only prove the first result. The proof of the other identity is similar. Using Fubini's theorem, for ``sufficiently good'' function $f$, and Dirichlet technique \cite[p.64]{podl}, we have
\begin{align}
 {}^\rho I^{\alpha_1, \beta_1}_{a+; \eta_1, \kappa_1}{}^\rho I^{\alpha_2, \beta_2}_{a+; \eta_2, -\rho\eta_1}f(x) 
       &=\frac{\rho^{2-\beta_1-\beta_2}x^{\kappa_1}}{\Gamma(\alpha_1)\Gamma(\alpha_2)} \int^x_a \frac{\tau^{\rho(\eta_1+1)+\kappa_2-1}}{(x^{\rho } - \tau^{\rho })^{1-\alpha_1}}\int_a^{\tau}\frac{t^{\rho(\eta_2+1)-1}}{(\tau^{\rho } - t^{\rho })^{1-\alpha_2}} f(t) dt d\tau \nonumber\\
			 &=\frac{\rho^{2-\beta_1-\beta_2}x^{\kappa_1}}{\Gamma(\alpha_1)\Gamma(\alpha_2)} \int^x_a t^{\rho(\eta_2+1)-1}f(t) \int_t^x \frac{\tau^{\rho(\eta_1+1)+\kappa_2-1}}{(x^{\rho } - \tau^{\rho })^{1-\alpha_1}(\tau^{\rho } - t^{\rho })^{1-\alpha_2}}d\tau dt \label{eq:pf1}
\end{align}
The inner integral is evaluated by the change of variable $u = (\tau^{\rho}-t^{\rho})/(x^{\rho}-t^{\rho})$ and taking $\kappa_2 = -\rho\eta_1$ into account,
\begin{align}
\int_t^x \frac{\tau^{\rho(\eta_1+1)+\kappa_2-1}}{(x^{\rho } - \tau^{\rho })^{1-\alpha_1}(\tau^{\rho } - t^{\rho })^{1-\alpha_2}}d\tau &=\frac{(x^{\rho}-t^{\rho})^{\alpha_1+\alpha_2-1}}{\rho}\int_0^1(1-u)^{\alpha_1-1}u^{\alpha_2 -1}du,\nonumber\\
&=\frac{(x^{\rho}-t^{\rho})^{\alpha_1+\alpha_2-1}}{\rho}\cdot\frac{\Gamma(\alpha_1)\Gamma(\alpha_2)}{\Gamma(\alpha_1+\alpha_2)}\label{eq:pf2}
\end{align}
according to the known formulae for the beta function \cite{key-8}. Substituting (\ref{eq:pf2}) into (\ref{eq:pf1}) we obtain
\begin{align}
{}^\rho I^{\alpha_1, \beta_1}_{a+; \eta_1, \kappa_1}{}^\rho I^{\alpha_2, \beta_2}_{a+; \eta_2, -\rho\eta_1}f(x)  
               &=\frac{\rho^{1-(\beta_1 +\beta_2)}x^{\kappa_1}}{\Gamma({\alpha_1+\alpha_2})}\int^x_a (x^{\rho}- t^{\rho})^{(\alpha_1 +\alpha_2)-1}t^{\rho(\eta_2+1)-1} f(t)dt,\nonumber\\
               &={}^\rho I^{\alpha_1+\alpha_2, \beta_1+\beta_2}_{a+; \eta_2, \kappa_1}f(x), 
\end{align}
and thus, (\ref{eq:semi}) is proved for ``sufficiently good'' functions $f$. 
If $\rho \geq c$ then by Theorem \ref{eq:th1} the operators $ {}^\rho I^{\alpha_1, \beta_1}_{a+; \eta_1, \kappa_1},\;{}^\rho I^{\alpha_2, \beta_2}_{a+; \eta_2, -\rho\eta_1} $ and ${}^\rho I^{\alpha_1+\alpha_2, \beta_1+\beta_2}_{a+; \eta_2, \kappa_1}$ are bounded in $\textit{X}^p_c(a,b)$, hence the relation (\ref{eq:semi}) is true for $f \in \textit{X}^p_c(a,b).$
This completes the proof of the theorem \ref{eq:th2}.
\end{proof}

We have the following corollary.

\begin{corollary}
Let $\alpha >0,\, \beta >0,\, 1 \leq p \leq \infty, \, 0 < a < b < \infty $ and let $\rho \in \mathbb{R}$ be such that $\rho \geq 1/p$. Then for $f \in \textit{L}^p(a,b)$ the semigroup property (\ref{eq:semi}) holds.
\end{corollary}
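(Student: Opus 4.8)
The plan is to recognize $L^p(a,b)$ as the particular member of the scale of weighted spaces $X^p_c(a,b)$ obtained by choosing $c=1/p$, and then to read off the statement from Theorem~\ref{eq:th2}. First I would recall that, by definition, $f\in X^p_c(a,b)$ means $t^{c-1/p}f(t)\in L_p(a,b)$, with norm $\|f\|_{X^p_c}=\|t^{c-1/p}f\|_{L_p}$. Taking $c=1/p$ collapses the weight to $1$, so that $X^p_{1/p}(a,b)=L^p(a,b)$ isometrically, $\|f\|_{X^p_{1/p}}=\|f\|_{L^p}$. For the endpoint $p=\infty$ one reads $1/p=0$ and uses the essential-supremum version of the $X^\infty_c$-norm, which gives $X^\infty_0(a,b)=L^\infty(a,b)$ in the same way.

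Next I would verify that, with this value of $c$, the hypotheses of Theorem~\ref{eq:th2} are exactly those assumed in the corollary: the admissibility condition $\rho\ge c$ becomes $\rho\ge 1/p$; the orders $\alpha_1=\alpha_2=\alpha>0$ and the parameters $\beta_1=\beta_2=\beta$ (indeed any real $\beta_1,\beta_2$) are allowed; and $0<a<b<\infty$ is unchanged. Theorem~\ref{eq:th2} then asserts precisely that both semigroup identities in \eqref{eq:semi} hold for every $f\in X^p_{1/p}(a,b)$, which by the identification above is every $f\in L^p(a,b)$. Thus the corollary follows immediately.

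I do not expect any genuine obstacle here: all of the analytic content — the Fubini/Dirichlet interchange of the iterated integral and the beta-function evaluation — is already carried out in the proof of Theorem~\ref{eq:th2}, and the boundedness needed to extend the identity from ``sufficiently good'' functions to all of $X^p_c$ is furnished by Theorem~\ref{eq:th1} (again applicable under $\rho\ge c=1/p$). The only step requiring a word of care is the correct matching of spaces and norms, in particular the convention $1/p=0$ and the essential-supremum norm in the case $p=\infty$; once that bookkeeping is in place, the proof is a one-line specialization.
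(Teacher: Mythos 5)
Your proposal is correct and is exactly the intended argument: the paper states this corollary without proof as an immediate specialization of Theorem~\ref{eq:th2}, obtained by identifying $L^p(a,b)$ with $X^p_c(a,b)$ for $c=1/p$ so that the hypothesis $\rho\ge c$ becomes $\rho\ge 1/p$. Your bookkeeping of the $p=\infty$ case and the observation that the $\beta$'s play no role are both accurate.
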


Now we shall prove the \textit{fractional product-integration formulae} (\ref{ipart}) for the generalized integral. A similar result is referred by some authors as \textit{fractional integration by parts formula}, but in our opinion this is not similar to the integration by parts formula due to the absence of a derivative term and we shall use the former to identify it.
\begin{theorem} \label{eq:th3}
Let $\alpha >0,\, \beta \in \mathbb{R}, \, 1 \leq p \leq \infty, \, 0 \leq a < b \leq \infty $ and let $\rho \in \mathbb{R}$ and $c \in \mathbb{R}$ be such that $\rho \geq c$. Then for $f, g \in \textit{X}^p_c(a,b)$ the fractional product-integration formula hold. That is,
\begin{equation}\label{eq:proi}
			    \int_a^b x^{\rho-1}f(x)\left({}^\rho I^{\alpha, \beta}_{a+; \eta, \kappa}g\right)(x)dx=\int_a^b x^{\rho-1}g(x)\left({}^\rho I^{\alpha, \beta}_{b-; \eta, \kappa}f\right)(x)dx
			\end{equation}
				Similar results are also valid when $a=0$ and $b=\infty$ and in particular for $\rho=2$ and $\rho=1$.
\end{theorem}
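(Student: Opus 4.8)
The plan is to expand both sides of (\ref{eq:proi}) via the definitions (\ref{lint}) and (\ref{rint}), recognize each as an iterated integral over the triangle $\{(\tau,x): a<\tau<x<b\}$, and then reverse the order of integration. Substituting the definition of ${}^\rho I^{\alpha,\beta}_{a+;\eta,\kappa}g$ into the left-hand side turns it into
\[
\frac{\rho^{1-\beta}}{\Gamma(\alpha)}\int_a^b\int_a^x x^{\rho+\kappa-1}f(x)\,\frac{\tau^{\rho(\eta+1)-1}}{(x^\rho-\tau^\rho)^{1-\alpha}}\,g(\tau)\,d\tau\,dx,
\]
whose integrand is supported on $a<\tau<x<b$. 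The first step is to verify that this double integral converges absolutely: this is exactly where the hypotheses $f,g\in X^p_c(a,b)$ and $\rho\geq c$ are used, in the same way as in the proof of Theorem~\ref{eq:th1}. Applying H\"older's inequality to dominate $|f|$ and $|g|$ against the kernel, after the substitutions $u=x/\tau$ (respectively $u=(\tau/x)^\rho$) already employed there, shows the iterated integral is finite, so that Fubini's theorem — the classical Dirichlet technique \cite[p.64]{podl} — legitimizes interchanging $dx$ and $d\tau$.

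Once the order is reversed, the inner integral is $\int_\tau^b x^{\rho+\kappa-1}(x^\rho-\tau^\rho)^{\alpha-1}f(x)\,dx$, which by definition (\ref{rint}) equals $\dfrac{\Gamma(\alpha)}{\rho^{1-\beta}\tau^{\rho\eta}}\big({}^\rho I^{\alpha,\beta}_{b-;\eta,\kappa}f\big)(\tau)$; note the kernels $(x^\rho-\tau^\rho)^{\alpha-1}$ in the left- and right-sided operators are literally the same function on the triangle, so no new change of variable is needed — only bookkeeping of exponents. Substituting this back, the constant $\rho^{1-\beta}/\Gamma(\alpha)$ and the factor $\Gamma(\alpha)/(\rho^{1-\beta}\tau^{\rho\eta})$ cancel, and the weight $\tau^{\rho(\eta+1)-1}$ loses its $\tau^{\rho\eta}$ to leave $\tau^{\rho-1}$, so the left-hand side collapses to $\int_a^b \tau^{\rho-1}g(\tau)\big({}^\rho I^{\alpha,\beta}_{b-;\eta,\kappa}f\big)(\tau)\,d\tau$, which is precisely the right-hand side.

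The only genuine obstacle is the convergence argument needed to apply Fubini; the algebraic identification of the two sides is immediate after the swap. For the endpoint cases $a=0$ or $b=\infty$ (the Weyl/Liouville situation) and the special values $\rho=1,2$, the argument is unchanged except that integrability near the extra endpoint must be re-checked, using the modified constant $K$ noted after Theorem~\ref{eq:th1}; I would simply record that the computation carries over verbatim. For $p=\infty$ one replaces the H\"older step by the essential-supremum estimate used at the end of the proof of Theorem~\ref{eq:th1}.
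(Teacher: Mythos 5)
Your proof is correct and follows essentially the same route as the paper: expand the left-hand side, interchange the order of integration over the triangle $a<\tau<x<b$ by the Dirichlet/Fubini technique, and identify the resulting inner integral with the right-sided operator $\left({}^\rho I^{\alpha,\beta}_{b-;\eta,\kappa}f\right)(\tau)$. The only difference is that you spell out the absolute-convergence justification for Fubini, which the paper leaves implicit; your exponent bookkeeping matches the paper's computation exactly.
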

\begin{proof}
The proof is straightforward. Using Dirichlet technique, we have
\begin{align*}
\int_a^b x^{\rho-1}f(x)\left({}^\rho I^{\alpha, \beta}_{a+; \eta, \kappa}g\right)(x)dx &= \frac{\rho^{1-\beta}}{\Gamma(\alpha)}\int_a^b x^{\kappa+\rho-1}f(x)\int_a^x \frac{\tau^{\rho(\eta+1)-1} }{(x^{\rho } - \tau^{\rho })^{1-\alpha}}g(\tau)\,d\tau dx \\
&= \frac{\rho^{1-\beta}}{\Gamma(\alpha)}\int_a^b \tau^{\rho(\eta+1)-1}g(\tau)\int_{\tau}^b \frac{x^{\kappa+\rho-1}}{(x^{\rho } - \tau^{\rho })^{1-\alpha}}f(x)\,dx d\tau \\
&= \int_a^b x^{\rho-1}g(x)\left({}^\rho I^{\alpha, \beta}_{b-; \eta, \kappa}f\right)(x)dx,
\end{align*}
 which completes the proof.
\end{proof}
\begin{remark}Instead of Eq.~(\ref{rint}), we can also consider a more general right- fractional integral given by
\begin{equation}
\left({}^\rho I^{\alpha, \beta}_{b-; \eta, \kappa, \omega}f\right)(x) = \frac{\rho^{1-\beta}x^{\omega}}{\Gamma({\alpha})} \int^b_x \frac{\tau^{\kappa+\rho-1} }{(\tau^{\rho } - x^{\rho })^{1-\alpha}}f(\tau)\,d\tau, \quad (0 \leq a < x <b \leq \infty). 
\label{rint2}
\end{equation}
The drawback is that the results in Theorem~\ref{eq:th3} would take a more complicated form. This explains the rationale behind the choice of the parameter(s) $\rho\eta$ of Eq.~(\ref{rint}).
\end{remark}

\begin{remark}
The generalized fractional integral introduced in this paper has a corresponding generalized fractional derivative which unifies the six fractional derivatives, namely, the Riemann-Liouville, Hadamard, Erd\'elyi-Kober, Katugampola, Weyl and Liouville fractional derivatives in to one form and will be discussed in another article. 
\end{remark}




%
%
%
\bibliographystyle{elsarticle-num}
%
%
%

%
\end{document}